\theoremstyle{plain}
\newtheorem{theorem}{Theorem}[section]
\theoremstyle{definition}
\newtheorem{remark}[theorem]{Remark}
\theoremstyle{remark}
\numberwithin{equation}{section}
\newcommand{\Om}{\Omega}
\newcommand{\R}{\mathbb R}
\newcommand{\leb}{{\mathcal L}}
\newcommand{\mthree}{{\mathbb M}^{3{\times}3}}
\newcommand{\mtwo}{{\mathbb M}^{2{\times}2}}
\newcommand{\dist}{{\rm dist}}
\newcommand{\sym}{{\rm sym}\,}
\renewcommand{\div}{{\rm div}}
\newcommand{\wto}{\rightharpoonup}
\newcommand{\e}{\varepsilon}
\newcommand{\E}{{\mathcal E}}
\newcommand{\J}{{\mathcal J}}
\newcommand{\JvK}{{\mathcal J}_{\rm vK}}
\newcommand{\Jlin}{{\mathcal J}_{\rm lin}}
\newcommand{\Id}{{\rm Id}}
\newcommand{\nablah}{\nabla_{\!h}}
\title[The von K\'arm\'an plate equation as a limit of 3d elasticity]
{The time-dependent von K\'arm\'an plate equation as a limit of 3d nonlinear elasticity}
\author[H.~Abels]{Helmut Abels}
\author[M.G.~Mora]{Maria Giovanna Mora}
\author[S.~M\"uller]{Stefan M\"uller}
\address[H.~Abels]{NWF I -- Mathematik,
Universit\"at Regensburg,
93040 Regensburg, Germany} 
\email{helmut.abels@mathematik.uni-regensburg.de}
\address[M.G.~Mora]{Scuola Internazionale Superiore di Studi Avanzati, via Beirut 2, 34151 Trieste, Italy}
\email{mora@sissa.it}
\address[S.~M\"uller]{Hausdorff Center for Mathematics
\& Institute for Applied Mathematics, Universit\"at Bonn,
Endenicher Allee 60, 53115 Bonn, Germany}
\email{stefan.mueller@hcm.uni-bonn.de}
\begin{document}

\begin{abstract}
The asymptotic behaviour of the solutions of three-dimensional nonlinear elastodynamics in a thin plate
is studied, as the thickness $h$ of the plate tends to zero. Under appropriate scalings of the applied force
and of the initial values in terms of $h$, it is shown that three-dimensional solutions of the nonlinear elastodynamic 
equation converge to solutions of the time-dependent von K\'arm\'an plate equation.
\end{abstract}

\maketitle 

{\small
\keywords{\noindent {\bf Keywords:} 
dimension reduction, nonlinear elasticity, von K\'arm\'an plate equation, wave equation, elastodynamics
}

\subjclass{\noindent {\bf 2000 Mathematics Subject Classification:} 74K20 (74B20, 74H10, 35L70)
}
}

\bigskip
\bigskip

\section{Introduction}

This paper concerns the rigorous derivation of two-dimensional dynamic models for a thin elastic plate
starting from three-dimensional nonlinear elastodynamics.
To be definite, we consider a thin elastic plate of reference configuration $\Om_h:=\Om'{\times}(-\frac{h}{2},\frac{h}{2})$, 
where $\Om'\subset\R^2$ is a bounded domain with Lipschitz boundary and $h>0$.
We assume the plate to be made of a hyperelastic material 
whose energy potential $W\colon\mthree\to[0,+\infty]$ is a continuous function,
satisfying the following natural conditions:
\begin{align}
& W(RF) = W(F) \quad \text{for every } R\in SO(3), F\in\mthree \quad
\text{(frame indifference),} 
\label{h1}
\\
& W = 0 \quad \text{on } SO(3), 
\label{h2}
\\
& W(F)\geq C\, \dist^2(F, SO(3)), \quad C>0,
\label{h3}
\\
& 
W \text{ is } C^2 \text{ in a neighbourhood of } SO(3). \label{h4}
\end{align}
The dynamic equation of nonlinear elasticity reads as
\begin{equation}\label{dyneq-0}
\partial^2_{\tau} w-\div_{\!x} DW(\nabla w)= f^h
\quad \text{in }  [0,\tau_h]{\times}\Om_h, 
\end{equation}
where $w\colon [0,\tau_h]{\times}\Om_h\to\R^3$ is the deformation of the plate
and $f^h\colon [0,\tau_h]{\times}\Om_h\to\R^3$ is an external body force applied to the plate.
Equation \eqref{dyneq-0} is typically supplemented by the initial conditions
$$
w|_{\tau=0} = \bar w^h, \qquad
\partial_\tau w|_{\tau=0} = \hat w^h,
$$
and by boundary conditions, such as mixed Neumann--clamped boundary conditions:
\begin{equation}\label{BCs-0}
\begin{array}{c}
w|_{\partial\Om'{\times}(-\frac{h}{2},\frac{h}{2})}  =  x,
\medskip
\\
DW(\nabla w) e_3\big|_{x_3=\pm\frac{h}{2}}  =  0,
\end{array}
\end{equation}
or, assuming $\Om'=(-L,L)^2$, mixed Neumann--periodic boundary conditions:
\begin{equation}\label{PerBCs-0}
\begin{array}{c}
\big(w(\tau,x)-x\big)\big|_{x_\alpha=-L}
= \big(w(\tau,x)-x\big)\big|_{x_\alpha=L} \quad \alpha=1,2, 
\medskip
\\
DW(\nabla w) e_3 \big|_{x_3=\pm\frac{h}{2}}=0.
\end{array}
\end{equation}
The intent of this paper is to characterize the asymptotic behaviour of solutions to \eqref{dyneq-0},
as the thickness parameter $h$ tends to zero, by identifying the two-dimensional dynamic equation
satisfied by their limit as $h\to0$. Our purpose is to rigorously deduce a two-dimensional dynamic
model for a thin elastic plate.
Lower dimensional models for thin bodies are of great interest in elasticity theory, as they
are typically easier to handle both from an analytical and a numerical point of view than 
their three-dimensional counterparts. The problem of their rigorous derivation starting from 
the three-dimensional theory is in fact one of the main questions in elasticity. We refer to \cite{Ant, Cia, Lov} for a survey of the classical derivation approach 
and a discussion of the history of the subject.

\

Steady-state solutions of \eqref{dyneq-0} satisfy the stationary equation $-\div_{\!x} DW(\nabla w)= f^h$
in $\Om_h$, together with the boundary conditions \eqref{BCs-0} or \eqref{PerBCs-0}, which formally correspond
to the Euler-Lagrange equations of the energy functional
$$
\E^h(w)= \frac1h \int_{\Omega_h} W(\nabla w)\, dx -\frac1h \int_{\Omega_h} f^h{\,\cdot\,}w\, dx.
$$
It is therefore natural to look for local or global minimizers of $\E^h$.
The study of the asymptotic behaviour of global minimizers of $\E^h$, as $h\to0$,
can be performed through the analysis of the $\Gamma$-limit of $\E^h$ (see \cite{DM} for a
comprehensive introduction to $\Gamma$-convergence). 
To do this, it is convenient to rescale $\Om_h$ to a fixed domain $\Om:=\Om'{\times}(-\frac12,\frac12)$
and to rescale deformations according to this change of variables, by setting
$$ 
y(x) =\begin{pmatrix}y'(x) \smallskip\\ y_3(x)\end{pmatrix} := w(x', hx_3)
$$
for every $x=(x',x_3)\in\Om$. 
Assuming for simplicity that $f^h(x)=f^h(x')$, the energy functional $\E^h$ can be therefore written as
$$
\J^h(y):=\E^h(w)= \int_\Omega W(\nablah y)\, dx - \int_\Omega f^h{\,\cdot\,}y\, dx,
$$
where we have introduced the notation
$$
\nablah y:=\big(\nabla' y \,|\, \tfrac1h \partial_3 y \big).
$$
Let now $y^h$ be a minimizer of $\J^h$ subject, for instance, to the (rescaled) clamped boundary conditions
$$
y^h(x)=\begin{pmatrix}x' \\ hx_3\end{pmatrix} \quad \text{for } x\in\partial\Om'{\times}(-\tfrac{1}{2},\tfrac{1}{2}).
$$
The asymptotic behaviour of $y^h$, as $h\to0$, depends on the scaling of the applied force $f^h$
in terms of $h$. More precisely, if $f^h$ is of order $h^\alpha$ with $\alpha\geq0$, then
$\J^h(y^h)\leq Ch^\beta$, where $\beta=\alpha$ for $0\leq\alpha\leq2$ and $\beta=2\alpha-2$ for $\alpha>2$,
and $y^h$ converge in a suitable sense to a minimizer of the functional given by the $\Gamma$-limit of the sequence 
$h^{-\beta}\J^h$, as $h\to0$ (see \cite{FJM02, FJM06, LR95}). 
In particular, it has been shown in \cite{FJM06} that,
if $f^h$ is a normal force of the form $f^h(x')=h^\alpha f(x')e_3$ with $\alpha\geq3$ and $f\in L^2(\Om')$, 
then 
\begin{equation}\label{yhconv0}
y^h \ \to \ \begin{pmatrix}x' \\ 0\end{pmatrix} \quad \text{strongly in } H^1(\Omega;\R^3),
\end{equation}
that is, minimizers converge to the identity. This suggests to introduce the 
(scaled) in-plane and out-of-plane displacements defined by 
$$
u^h(x') := \frac{1}{h^{\alpha-1}}\int_{-\frac12}^{\frac12} 
\big((y^h)' -x' \big)\,dx_3, \qquad
v^h(x'):=\frac{1}{h^{\alpha-2}}\int_{-\frac12}^{\frac12}
y_3^h\,dx_3.
$$
As $h\to0$, $(u^h, v^h)$ converges strongly in $H^1$ to a limit displacement $(u,v)$,
which is a minimizer of the $\Gamma$-limit of $1/h^{2\alpha-2}\J^h$ (see \cite[Theorem~2]{FJM06}).
More precisely, if $\alpha=3$, then 
$(u,v)$ is a minimizer of the von K\'arm\'an plate functional
\begin{align*}
\JvK(u,v)=\frac12\int_{\Om'} Q_2\big(\sym&\nabla'u+\tfrac12 \nabla'v{\,\otimes\,}\nabla'v\big)\, dx'
\\ 
& {}+\frac{1}{24}\int_{\Om'}Q_2((\nabla')^2v)\, dx' -\int_{\Om'} fv\, dx' 
\end{align*}
with respect to the boundary conditions $u=0$, $v=0$, and $\nabla'v=0$ on $\partial\Om'$.
Here $Q_2:\mtwo\to\R$ is the quadratic form defined by
\begin{equation}\label{defQ2}
Q_2(G)=\leb_2 G {\,:\,}G :=\min_{F''=G} Q_3(F), 
\end{equation}
where $Q_3:\mthree\to\R$ is the quadratic form given by $Q_3(F):=D^2W(\Id)F{\,:\,}F$,
while $F''$ denotes the $2{\times}2$-submatrix of $F$ defined by $F''_{ij}=F_{ij}$
for $1\leq i,j\leq2$.

If instead $\alpha>3$, then the limit in-plane displacement $u$ is equal to $0$, while the
out-of-plane displacement $v$ is a minimizer of the linear plate functional
$$
\Jlin(v)=\frac{1}{24}\int_{\Om'}Q_2((\nabla')^2v)\, dx' -\int_{\Om'} fv\, dx'
$$
with respect to the boundary conditions $v=0$ and $\nabla'v=0$ on $\partial\Om'$.

This convergence result has been extended in \cite{Mue-Pak} to the case 
of a sequence of solutions of the equilibrium equation $-\div_{\!x} DW(\nabla w)= f^h$, 
assuming suitable growth conditions from above on the energy potential $W$.
This assumption has been removed in \cite{Mor-Sca}, but this requires to work with a 
different notion of stationarity, related to the Cauchy stress tensor balance law (see \cite{Ball}).
A different approach, based on centre manifold theory, was pursued by Mielke in \cite{Mie}
to compare solutions in a thin strip to a one-dimensional problem. Another related result 
is due to Monneau \cite{Mon}: given a sufficiently smooth and small solution of
the von K\'arm\'an equation, he proved the existence of a nearby three-dimensional solution.

\

In this paper we focus on the dynamical case with 
$f^h(\tau,x)=h^\alpha f(\tau,x')e_3$, $\alpha\geq3$, and $f\in L^2((0,+\infty);L^2(\Om'))$.
We also assume that the initial values $\bar w^h$, $\hat w^h$ have the following scaling in terms of $h$:
$$
\tfrac12 \int_{\Om_h}|\hat w^h(x)|^2\,dx + 
\int_{\Om_h} W(\nabla \bar w^h(x))\, dx\leq Ch^{2\alpha-1},
$$
which can be equivalently written on $\Omega$ as
\begin{equation}\label{hypind-0}
\tfrac12 \int_\Om |\hat w^h(x', hx_3)|^2\,dx + 
\int_\Om W(\nabla \bar w^h(x', hx_3))\, dx\leq Ch^{2\alpha-2}.
\end{equation}
Let $w^h$ be a solution to \eqref{dyneq-0} on $[0,\tau_h]{\times}\Omega_h$.
To discuss its limiting behaviour as $h\to 0$, it is convenient to rescale $\Om_h$ to the fixed
domain $\Om$, as before, and to rescale time by setting $t=h\tau$.
According to this change of variables, we set
$$ 
y^h(t,x):= w^h\big(\tfrac{t}{h},(x', hx_3)\big)
$$
for every $(t,x)\in (0,T_h){\times}\Om$, where $T_h:=h\tau_h$.
With this notation we have that the scaled deformations $y^h$ satisfy the equation
\begin{equation}\label{dyneq}
h^2\partial^2_{t} y^h-\div_h DW(\nabla_h y^h)= h^\alpha ge_3
\quad \text{in }  (0,T_h){\times}\Om, 
\end{equation}
where $g(t,x'):= f\big(\tfrac{t}{h},x'\big)$ for every $(t,x)\in (0,+\infty){\times}\Om'$ and 
the scaled divergence $\div_h \Phi$ of a given $\Phi\in H^1(\Om;\mthree)$ is
defined by
$$
\div_h\Phi{\,\cdot\,}e_i:=\sum_{j=1,2}\partial_j \Phi_{ij} +\frac1h \partial_3\Phi_{i3}
\qquad i=1,2,3.
$$
The scaled deformations $y^h$ satisfy the following initial conditions:
\begin{align}
& y^h(0,x)=\bar w^h(x',hx_3) \quad \text{for } x\in\Om,
\label{ICy}
\\
& \partial_t y^h(0,x)=\tfrac1h \hat w^h(x',hx_3) \quad \text{for } x\in\Om,
\label{ICdy}
\end{align}
together with the mixed Neumann--clamped boundary conditions
\begin{equation}\label{Dir}
\begin{array}{c}
y^h|_{\partial\Om'{\times}(-\frac12,\frac12)}=\begin{pmatrix}x' \\ hx_3\end{pmatrix},
\medskip
\\
DW(\nabla_h y^h) e_3\big|_{x_3=\pm\frac12}=0,
\end{array}
\end{equation}
or, respectively, assuming $\Om'=(-L,L)^2$, the mixed Neumann--periodic boundary conditions
\begin{equation}\label{PerDir}
\begin{array}{c}
\Big(y^h(t,x)-\begin{pmatrix}x' \\ hx_3\end{pmatrix}\Big)\Big|_{x_\alpha=-L}
= \Big(y^h(t,x)-\begin{pmatrix}x' \\ hx_3\end{pmatrix} \Big)\Big|_{x_\alpha=L} \quad \alpha=1,2, 
\medskip
\\
DW(\nabla_h y^h) e_3\big|_{x_3=\pm\frac12}=0.
\end{array}
\end{equation}
We note that \eqref{hypind-0} is equivalent to the following scaling condition on the initial
values of $y^h$:
$$
\tfrac12 h^2\int_\Om |\partial_t y^h(0,x)|^2\,dx + 
\int_\Om W(\nabla_h y^h(0,x))\, dx\leq Ch^{2\alpha-2}.
$$
The existence of a solution to \eqref{dyneq}, supplemented by the initial conditions 
\eqref{ICy}--\eqref{ICdy} and the mixed Neumann--periodic boundary conditions \eqref{PerDir}, 
is guaranteed by the recent results of~\cite{AMM09}. 
More precisely, we have proved in \cite[Theorem~3.1]{AMM09} that,  in the case $\alpha>3$,
under suitable regularity assumptions on $f$ and appropriate scaling and regularity of the 
initial data $\bar w^h$, $\hat w^h$ (compatible with \eqref{hypind-0}),
for every $T>0$ there exists $h_0>0$ such that a strong solution exists on $[0,T]$ 
for every $h\in(0,h_0)$.
In the case $\alpha=3$ we have shown that, if in addition $f$ is small enough on $[0,T]$,
a strong solution exists on $[0,T]$ for every $h\in(0,1)$.
In other words, we can assume that there exists a solution to \eqref{dyneq} on a time interval
$[0,T]$ independent of $h$.

In this paper we prove (Theorem~\ref{mainthm}) that, if $y^h$ is a weak solution to \eqref{dyneq} on $[0,T]$,
satisfying the initial conditions \eqref{ICy}--\eqref{ICdy}, the boundary conditions \eqref{Dir}
or \eqref{PerDir}, and the energy inequality, then convergence \eqref{yhconv0} still
holds uniformly in time. Moreover, the in-plane and out-of-plane displacements
$$
u^h(t,x') := \frac{1}{h^{\alpha-1}}\int_{-\frac12}^{\frac12} 
\big((y^h)' -x' \big)\,dx_3, \qquad
v^h(t,x'):=\frac{1}{h^{\alpha-2}}\int_{-\frac12}^{\frac12}y_3^h\,dx_3
$$
converge in a suitable sense to a limit displacement $(u,v)$. For $\alpha=3$ the limit
displacement $(u,v)$ is a solution to the dynamic von K\'arm\'an plate equations 
\begin{equation}\label{dynvK}
\begin{cases}
 \displaystyle \partial^2_{t} v +\tfrac{1}{12}\div\big[ \div \leb_2((\nabla')^2 v)\big]
-\div\big[\leb_2(\sym\nabla' u +\tfrac12 \nabla'v\otimes\nabla'v)\nabla'v\big]=g,
\smallskip
\\
\displaystyle
\div\big[\leb_2(\sym\nabla' u +\tfrac12 \nabla'v\otimes\nabla'v)\big]=0,
\end{cases}
\end{equation}
in $[0,T]{\times}\Om'$, and satisfies the boundary conditions
\begin{equation}\label{BCs}
u|_{\partial\Om'}=0, \quad  
v|_{\partial\Om'}=0, \quad  \nabla' v|_{\partial\Om'}=0,
\end{equation}
or, respectively, 
\begin{equation}\label{PerBCs}
u|_{x_\alpha=-L}=u|_{x_\alpha=L}, \quad  
v|_{x_\alpha=-L}=v|_{x_\alpha=L}, \quad  \nabla' v|_{x_\alpha=-L}= \nabla' v|_{x_\alpha=L},
\end{equation}
and the initial conditions
\begin{equation}\label{ICs}
v|_{t=0}=\bar w_3, \quad \partial_t v|_{t=0}=\hat w_3.
\end{equation}
Here $\leb_2$ is the linear form introduced in \eqref{defQ2},
while the limiting initial values $\bar w_3$ and $\hat w_3$ are the limits 
of suitably scaled averages of $\bar w^h_3$ and $\hat w^h_3$
(see \eqref{iv-conv}), which exist owing to the scaling condition \eqref{hypind-0}.

For $\alpha>3$ the limit in-plane displacement $u$ is equal to $0$, while the out-of-plane
displacement $v$ is a solution to the dynamic linear plate equation
\begin{equation}\label{dynlin}
\partial^2_{t} v +\tfrac{1}{12}\div\big[ \div \leb_2((\nabla')^2 v)\big]=g \qquad \text{in }
[0,T]{\times}\Om' 
\end{equation}
and satisfies the boundary conditions \eqref{BCs}, or, respectively, \eqref{PerBCs},
and the initial conditions \eqref{ICs}. 
This generalizes the convergence result of \cite[Theorem~4.1]{AMM09},
where we proved that for a special choice of the initial values and under the assumption
$W(F)=\dist^2(F, SO(3))$ the asymptotic development of the three-dimensional strong solutions of \eqref{dyneq}
can be characterized in the case $\alpha>3$ in terms of 
the solution $v$ of \eqref{dynlin}. 

To our knowledge, the present contribution, together with the results of \cite{AMM09},
is the first rigorous derivation of a 
lower dimensional elastodynamic model for a thin domain in the nonlinear framework. 
This problem has been extensively studied in the linear setting (see, e.g., \cite{Rao, Tam, Vod, X}), 
that is, performing the derivation starting from the three-dimensional linearized evolution model. 
However, since thin structures may undergo large rotations even under the action 
of very small forces, one cannot assume a priori the small strain condition, 
on which linearized elasticity is based. Our result implies, in particular, 
that the use of the two-dimensional dynamic linear plate equation \eqref{dynlin} is mathematically justified 
whenever the applied loads are of order $h^\alpha$ with $\alpha>3$
and the initial values satisfy \eqref{hypind-0}. 

We also mention a related result by Ge, Kruse, and Marsden \cite{GKM96}, where the problem of the limit 
of three-dimensional evolutionary elastic models to shell and rod models is addressed by
studying the convergence (in a suitable sense) of the underlying Hamiltonian structure. 
This approach however does not provide convergence of solutions.

\section{Statement and Proof of the Main Result}

This section is devoted to the proof of the following theorem, which is
the main result of the paper. We shall denote by $J_T$ the time interval given by
$[0,T]$ if $T\in(0,+\infty)$, and by $[0,+\infty)$ if $T=+\infty$.   

\begin{theorem}\label{mainthm}
Assume that \eqref{h1}--\,\eqref{h4} hold and that $W$ is differentiable and satisfies the growth condition
\begin{equation}\label{Lip}
|DW(F)|\leq C(|F|+1) \qquad \text{for every } F\in\mthree.
\end{equation}
Let $\alpha\geq3$ and let $(\hat w^h)\subset L^2(\Om_h;\R^3)$ and $(\bar w^h)\subset H^1(\Om_h;\R^3)$ be two
sequences satisfying
\begin{equation}\label{hypind}
\tfrac12 \int_\Om |\hat w^h(x', hx_3)|^2\,dx + 
\int_\Om W(\nabla \bar w^h(x',hx_3))\, dx\leq Ch^{2\alpha-2}.
\end{equation}
Let $T\in(0,+\infty]$, $g\in L^2((0,T);L^2(\Om'))$, and $h_0>0$. 
For every $h\in(0,h_0)$ let $y^h\in L^2((0,T);H^1(\Om;\R^3))$ with 
$$
\begin{array}{c}
\partial_t y^h \in L^2((0,T);L^2(\Om;\R^3)),
\smallskip
\\
\partial^2_{t} y^h \in L^2((0,T);H^{-1}(\Om;\R^3))
\end{array}
$$
be a weak solution to \eqref{dyneq} in $(0,T)$, satisfying the boundary conditions \eqref{Dir} (or,
assuming $\Om'=(-L,L)^2$, \eqref{PerDir}),
the initial conditions \eqref{ICy}--\,\eqref{ICdy}, and the energy inequality
\begin{align}
\tfrac12  h^2  \int_\Om &  |\partial_t y^h(t,x)|^2\, dx
+ \int_\Om W(\nablah y^h(t,x))\, dx
\nonumber
\\
 & \leq 
\tfrac12 \int_\Om |\hat w^h(x',hx_3)|^2\, dx
+ \int_\Om W(\nabla \bar w^h(x',hx_3))\, dx
\label{energy0}
\\
& \qquad 
{}+ \int_0^t \!\! \int_\Om h^\alpha g(s,x')\partial_t y_3^h(s,x)\, dxds
\nonumber
\end{align}
for a.e.\ $t\in(0,T)$. Then
\begin{equation}\label{yhconv}
y^h \to \begin{pmatrix} x' \\ 0  
\end{pmatrix} 
 \quad \text{strongly in } L^\infty_{loc}(J_T; H^1(\Om;\R^3)).
\end{equation}
Moreover, setting
$$
u^h(t,x') := \frac{1}{h^{\alpha-1}}\int_{-\frac12}^{\frac12} 
\big((y^h)' -x' \big)\,dx_3, \qquad
v^h(t,x'):=\frac{1}{h^{\alpha-2}}\int_{-\frac12}^{\frac12}y_3^h\,dx_3,
$$
the following assertions hold.
\medskip

\begin{itemize}
	\item[(i)] (von K\'arm\'an regime) Assume $\alpha=3$. 
Then, there exist $u\in L^\infty_{loc}(J_T;\allowbreak H^1(\Om';\R^2))$ and
$v\in L^\infty_{loc}(J_T; H^2(\Om'))\cap W^{1,\infty}_{loc}(J_T;\allowbreak L^2(\Om'))$,
with $\partial_t v\in C(J_T;H^{-3}(\Om'))$, 
such that, up to subsequences,
\begin{equation}\label{uconv}
u^h\wto u \quad \text{weakly}^* \text{ in } L^\infty_{loc}(J_T; H^1(\Om';\R^2))
\end{equation}
and
\begin{align}\label{vconv}
v^h\to v \quad \text{strongly in } L^\infty_{loc}(J_T; L^2(\Om')),
\\
\partial_t v^h \wto \partial_t v \quad \text{weakly}^* \text{ in } L^\infty_{loc}(J_T; L^2(\Om')),
\label{vconv2}
\end{align}
as $h\to 0$. The limit displacement $(u,v)$ is a weak solution in $(0,T)$ 
of the dynamic von K\'arm\'an plate equations \eqref{dynvK}, supplemented by
the boundary conditions \eqref{BCs} (or, respectively, \eqref{PerBCs}) and
the initial conditions \eqref{ICs}, where 
\begin{equation}\label{iv-conv}
\begin{array}{rl}
\displaystyle
\tfrac{1}{h}\int_{-\frac12}^{\frac12}\bar w_3^h(\cdot,hx_3)\,dx_3 \ \to \ \bar w_3
& \text{strongly in } H^1(\Om'),
\smallskip
\\
\displaystyle
\tfrac{1}{h^2}\int_{-\frac12}^{\frac12}\hat w_3^h(\cdot,hx_3)\,dx_3 \ \wto \ \hat w_3
& \text{weakly in } L^2(\Om').
\end{array}
\end{equation}
\medskip

	\item[(ii)] (linear regime) Assume $\alpha>3$. Then, \eqref{uconv} holds with $u=0$
and there exists 
$v\in L^\infty_{loc}(J_T; H^2(\Om'))\cap W^{1,\infty}_{loc}(J_T;\allowbreak L^2(\Om'))$,
with $\partial_t v\in C(J_T;H^{-3}(\Om'))$,  
such that, up to subsequences, \eqref{vconv}--\,\eqref{vconv2} hold. 
The limit displacement $v$ is a weak solution in $(0,T)$ to the dynamic linear plate equation
\eqref{dynlin}, supplemented by the boundary conditions \eqref{BCs} (or, respectively, \eqref{PerBCs}) and
the initial conditions \eqref{ICs}, where now
\begin{equation}\label{iv-conv-gen}
\begin{array}{rl}
\displaystyle
\frac{1}{h^{\alpha-2}}\int_{-\frac12}^{\frac12}\bar w_3^h(\cdot,hx_3)\,dx_3 \ \to \ \bar w_3
& \text{strongly in } H^1(\Om'),
\smallskip
\\
\displaystyle
\frac{1}{h^{\alpha-1}}\int_{-\frac12}^{\frac12}\hat w_3^h(\cdot,hx_3)\,dx_3 \ \wto \ \hat w_3
& \text{weakly in } L^2(\Om').
\end{array}
\end{equation}
\end{itemize}
\end{theorem}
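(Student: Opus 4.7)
The plan is to adapt the static $\Gamma$-convergence proof of Friesecke--James--M\"uller \cite{FJM06} to the dynamic setting, combining the geometric rigidity estimate with careful passage to the limit in the weak formulation of \eqref{dyneq}. First, using the energy inequality \eqref{energy0}, Young's inequality on the forcing term $h^{\alpha} g\, \partial_t y_3^h$, and Gronwall, I would derive the \emph{a priori} bound
\[
\sup_{t\in[0,T']} \Bigl( \tfrac12 h^2 \|\partial_t y^h(t)\|_{L^2}^2 + \int_\Om W(\nablah y^h(t))\,dx \Bigr) \leq C_{T'}\, h^{2\alpha-2}
\]
for every $T'<T$. Coercivity \eqref{h3} then yields $\|\dist(\nablah y^h, SO(3))\|_{L^\infty_t L^2_x} \leq C h^{\alpha-1}$. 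Applying the Friesecke--James--M\"uller rigidity estimate on small horizontal cylinders of radius $h$ and patching (as in \cite[Sect.~6]{FJM06}) time-slice by time-slice, I would construct rotations $R^h(t,x')\in SO(3)$ with
\[
\|\nablah y^h(t,\cdot) - R^h(t,\cdot)\|_{L^2(\Om)} \leq C h^{\alpha-1}, \qquad \|\nabla' R^h(t,\cdot)\|_{L^2(\Om')} \leq C h^{\alpha-2},
\]
uniformly in $t$.

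Second, the arguments of \cite[Sect.~7]{FJM06} applied pointwise in $t$ yield $R^h\to\Id$ uniformly, hence the strong $H^1$-convergence \eqref{yhconv}, the weak$^\ast$ convergence \eqref{uconv} of $u^h$, weak$^\ast$ convergence of $v^h$ in $L^\infty_{loc}(J_T;H^2(\Om'))$, and the identification of the scaled nonlinear strain
\[
\frac{\sym\bigl((R^h)^T \nablah y^h - \Id\bigr)}{h^{\alpha-1}} \ \wto\ -x_3 (\nabla')^2 v + E_0,
\]
where $E_0 = \sym\nabla' u + \tfrac12\nabla' v\otimes\nabla' v$ for $\alpha=3$ and $E_0=\sym\nabla' u$ for $\alpha>3$. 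From the bound on $h\partial_t y^h$ one obtains $\partial_t v^h$ bounded in $L^\infty_t L^2$, and combining with the $L^\infty_t H^2$ bound via Aubin--Lions yields the strong convergence \eqref{vconv} as well as strong convergence of $\nabla' v^h$ in $L^2_{t,x'}$. Control of $\partial_t^2 v$ in $C(J_T;H^{-3}(\Om'))$ then follows from \eqref{dyneq} using the growth \eqref{Lip}.

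Third, to identify the limit equation, I would test the weak form of \eqref{dyneq} against
$\varphi^h(t,x) = h^{\alpha-2}(\psi_1,\psi_2,0)^T + h^{\alpha-3}(0,0,\phi)^T + \text{(correctors)}$
with $\psi_1,\psi_2,\phi$ smooth and compatible with the boundary conditions, where the correctors depend linearly on $x_3$ and realize the minimum defining $Q_2$ in \eqref{defQ2}. Using frame indifference together with $DW(\Id)=0$, the expansion
\[
DW(\nablah y^h) = R^h D^2 W(\Id)\bigl((R^h)^T \nablah y^h - \Id\bigr) + o(|\nablah y^h - R^h|),
\]
combined with the strain identification above, produces after division by $h^{\alpha-1}$ and integration in $x_3$ the bending operator $\tfrac{1}{12}\div[\div \leb_2((\nabla')^2 v)]$, the membrane terms coming from $E_0$, and, from the kinetic term $h^2\partial_t^2 y^h$ paired with $h^{\alpha-3}\phi e_3$, the limit $\partial_t^2 v$. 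The initial data \eqref{ICs} pass to the limit via \eqref{iv-conv}--\eqref{iv-conv-gen} and the bounds on $\partial_t v^h$.

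The main obstacle is the passage to the limit in the nonlinear membrane term $\tfrac12 \nabla' v\otimes\nabla' v$ in the $\alpha=3$ case: this requires \emph{strong} convergence of $\nabla' v^h$, which in the static setting is almost automatic from $H^2$-boundedness but in the dynamic setting must be extracted via Aubin--Lions using the simultaneous $L^\infty_t(H^2)$ and $W^{1,\infty}_t(L^2)$ control. A related subtlety is that \eqref{energy0} is an inequality rather than an identity, so strong time-convergence of $h\partial_t y^h$ and of $\nablah y^h$ cannot be read off directly from energy balance; one must instead derive the strong convergences from compactness and then verify consistency with the limit equation's own energy, exploiting the positive definiteness of $Q_2$ on admissible limit strains.
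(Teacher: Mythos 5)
Your outline follows the broad strategy of the paper (a priori bounds from the energy inequality, FJM rigidity, decomposition of $\nablah y^h$, identification of the strain and stress, testing the weak form), but the central compactness step is wrong and this is precisely where the dynamic case differs from the static one.

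You assert that $v^h$ is bounded in $L^\infty_{loc}(J_T;H^2(\Om'))$, so that Aubin--Lions together with the bound on $\partial_t v^h$ yields strong convergence of $\nabla' v^h$ in $L^2_{t,x'}$. This bound does not hold. The solutions $y^h$ are only in $L^2_t H^1_x$, so $v^h(t,\cdot)=h^{-(\alpha-2)}\int_{-1/2}^{1/2}y^h_3\,dx_3$ is a priori only $H^1(\Om')$ in the in-plane variables; $\nabla'^2 v^h$ is merely a distribution. The $H^2$ regularity holds only for the \emph{limit} $v$, and is deduced from the identity $\partial_\alpha v = A_{3\alpha}$ with $A\in L^\infty_t H^1$. (The same is true in the static setting: FJM06 do not obtain uniform $W^{2,2}$ bounds on $v^h$, so your remark that strong convergence of $\nabla' v^h$ is ``almost automatic from $H^2$-boundedness'' is already a misreading of the static case.) Without this bound the Aubin--Lions argument has no starting point, and you have no route to the strong convergence of $\nabla' v^h$ needed for the membrane term $\tfrac12\nabla' v\otimes\nabla' v$.

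The ingredient your proposal is missing is Step~2 of the paper's proof: strong compactness of the scaled rotation difference $A^h=(R^h-\Id)/h$ in $L^q_{loc}(J_T;L^p(\Om'))$. Since $A^h$ is only bounded in $L^\infty_t H^1_{x'}$, which gives no compactness in time, the paper invokes Simon's criterion (Theorem~\ref{thmsimon}) and verifies the time-translate estimate $\int_{t_1}^{t_2}\|A^h(t+s,\cdot)e_\alpha - A^h(t,\cdot)e_\alpha\|_{H^{-1}(\Om')}\,dt\le C(2h+|s|)(t_2-t_1)$ by writing $A^h e_\alpha$ in terms of $\partial_\alpha y^h$ up to $O(h)$ and using the kinetic energy bound $\sup_t\|\partial_t y^h(t,\cdot)\|_{L^2}\le C(T')h$ to control the modulus of continuity in time. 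This is the genuinely dynamic piece of the argument. Note also that strong convergence of $A^h$ is used not only to pass to the limit in $\nabla' v^h\otimes\nabla' v^h$ (via $\nabla' v^h=A^h_{3\cdot}+O(h)$) but also, crucially, in the products $(A^h\bar E^h)_{3\alpha}$ and $\sum_j A^h_{\alpha j}\bar E^h_{j3}$ that arise in Step~6 when the weak form is rescaled, since $\bar E^h$ converges only weakly. Your proposal does not address these terms either. The rest of your sketch (rigidity, stress expansion via frame indifference, moment decomposition or equivalently $x_3$-affine correctors, limit of the initial data) is consistent with the paper, but the compactness gap is fatal as written and must be repaired along these lines.
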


\begin{remark}
The existence of the limits in \eqref{iv-conv} and \eqref{iv-conv-gen} 
is guaranteed by the scaling condition \eqref{hypind} (see Step~7 in the proof
of Theorem~\ref{mainthm}).
\end{remark}

\begin{remark}\label{rmk:weak}
We shall consider the following notion of weak solutions.
We say that a function $y^h\in L^2((0,T);H^1(\Om;\R^3))\cap H^1((0,T);L^2(\Om;\R^3))$
is a weak solution to \eqref{dyneq} in $(0,T)$ satisfying the boundary conditions \eqref{Dir} 
if $y^h=(x',hx_3)$ on $(0,T){\times}\partial\Om'{\times}(-\frac12,\frac12)$ and
the following equation is fulfilled:
$$
\int_0^T \!\!\!\int_\Om h^2\partial_t y^h{\,\cdot\,} \partial_t\varphi\, dxdt
- \int_0^T \!\!\!\int_\Om DW(\nablah y^h){\,:\,}\nablah\varphi\, dxdt
+ \int_0^T \!\!\!\int_\Om h^3 g\varphi_3\, dxdt =0 
$$
for every $\varphi\in L^2((0,T);H^1(\Om;\R^3))\cap H^1_0((0,T);L^2(\Om;\R^3))$ 
such that $\varphi=0$ on $(0,T){\times}\partial\Om'{\times}(-\frac12,\frac12)$.

Analogously, we say that a pair $(u,v)$ with $u\in L^\infty_{loc}(J_T; H^1(\Om';\R^2))$ and
$v\in L^\infty_{loc}(J_T; H^2(\Om'))\cap W^{1,\infty}_{loc}(J_T; L^2(\Om'))$ is a weak solution 
to \eqref{dynvK} in $(0,T)$, supplemented by the boundary conditions \eqref{BCs}, if 
\eqref{BCs} is satisfied and for every $T'\in J_T$ the following two equations
are fulfilled:
$$
\begin{array}{c}
\displaystyle
\int_0^{T'} \!\!\!\int_{\Om'} \partial_t v\partial_t\phi\, dx'dt
-\int_0^{T'} \!\!\!\int_{\Om'} \leb_2(\sym\nabla' u +\tfrac12 \nabla'v\otimes\nabla'v){\,:\,}
\nabla'v\otimes\nabla'\phi\, dx'dt
\medskip
\\
\displaystyle
{}-\int_0^{T'} \!\!\! \int_{\Om'} \tfrac{1}{12}\leb_2((\nabla')^2v){\,:\,}(\nabla')^2\phi
\, dx'dt
+ \int_0^{T'} \!\!\!\int_{\Om'} g\phi\, dx'dt=0
\end{array}
$$
for every $\phi\in L^2((0,T'); H^2_0(\Om'))\cap H^1_0((0,T'); L^2(\Om'))$, and
$$
\int_0^{T'} \!\!\! \int_{\Om'} \leb_2(\sym\nabla' u +\tfrac12 \nabla'v\otimes\nabla'v)
{\,:\,}\nabla'\psi\, dx'dt=0 
$$
for every $\psi\in L^2((0,T'); H^1_0(\Om';\R^2))$.
Finally, a function $v\in L^\infty_{loc}(J_T; H^2(\Om'))\cap W^{1,\infty}_{loc}(J_T; L^2(\Om'))$ 
is a weak solution to \eqref{dynlin} in $(0,T)$, supplemented by the boundary conditions \eqref{BCs}, if 
\eqref{BCs} is satisfied and for every $T'\in J_T$ the following equation
is fulfilled:
$$
\int_0^{T'} \!\!\!\int_{\Om'} \partial_t v\partial_t\phi\, dx'dt
-\int_0^{T'} \!\!\! \int_{\Om'} \tfrac{1}{12}\leb_2((\nabla')^2v){\,:\,}(\nabla')^2\phi
\, dx'dt
+ \int_0^{T'} \!\!\!\int_{\Om'} g\phi\, dx'dt=0
$$
for every $\phi\in L^2((0,T'); H^2_0(\Om'))\cap H^1_0((0,T'); L^2(\Om'))$.
\end{remark}

\begin{remark}
The notation $L^p_{loc}(J_T;X)$ denotes the space of all strongly measurable functions 
which are $p$-integrable (or essentially bounded if $p=\infty$) on every compact interval of $J_T$ 
with values in the Banach space $X$.
In particular, if $T\in(0,+\infty)$, the space $L^p_{loc}(J_T;X)$ coincides with $L^p((0,T);X)$, while,
if $T=+\infty$, $L^p_{loc}(J_T;X)$ is the space of functions belonging to $L^p((0,T');X)$ for every $T'<+\infty$. 
\end{remark}

Two of the main difficulties in the proof of Theorem~\ref{mainthm} are to show that the
deformation gradients must be close to the identity, because of the smallness of the applied force 
and of the initial data, and to derive enough compactness to pass to the limit
in the three-dimensional equation. The key remark is that the energy inequality \eqref{energy0}
satisfied by the solutions $y^h$, together with the scaling assumptions on the applied force
and the initial values, imply a corresponding precise scaling of the elastic part of the energy
(see \eqref{el-energy} below, in the case $\alpha=3$). By applying the quantitative rigidity
estimate proved in \cite[Theorem~3.1]{FJM02}, we can deduce from this bound on the elastic energy of $y^h$,
a decomposition of the deformation gradients $\nablah y^h$ into a rotation $R^h$ (depending only
on the in-plane variables) and a strain $G^h$ of order $h^{\alpha-1}$ (see \eqref{decomp} below,
in the case $\alpha=3$). The good controls on $R^h$ and $G^h$ provided by the rigidity estimate are
now the crucial ingredient to obtain the compactness properties needed to pass to the limit
in the three-dimensional equation. In particular, the following compactness criterion in the space 
$L^p((0,T);B)$, $B$ a Banach space, will be used. 

\begin{theorem}[{\cite[Theorem~6]{Simon}}]\label{thmsimon}
Let $X\hookrightarrow B\hookrightarrow Y$ be Banach spaces with compact imbedding $X\hookrightarrow B$.
Let $T\in(0,+\infty)$ and let $\mathcal F$ be a bounded subset of $L^\infty((0,T);X)$.
Assume that for every $0<t_1<t_2<T$
$$
\sup_{f\in{\mathcal F}}\|{\mathcal T}_s f -f\|_{L^1((t_1,t_2);Y)} \to 0, \quad \text{as }
s\to 0,
$$
where ${\mathcal T}_s f(t,x):=f(t+s,x)$ for every $t\in(-s,T-s)$ and $x\in X$.
Then $\mathcal F$ is relatively compact in $L^p((0,T);B)$ for every $1\leq p<\infty$.
\end{theorem}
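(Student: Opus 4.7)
The plan is to follow the standard Aubin--Lions--Simon strategy. After two elementary reductions, I would combine the spatial compactness provided by the compact embedding $X\hookrightarrow B$ with the time equicontinuity furnished by the $Y$-valued shift hypothesis through an Ehrling interpolation inequality, and conclude via approximation by time averages and a vector-valued Arzel\`a--Ascoli argument.

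\emph{Reductions.} Let $M$ denote the uniform bound of $\mathcal F$ in $L^\infty((0,T);X)$, so that $\mathcal F$ is also bounded in $L^\infty((0,T);B)$. The pointwise estimate $\|f-g\|_B^p\leq (2M)^{p-1}\|f-g\|_B$ gives $\|f-g\|_{L^p((0,T);B)}^p\leq (2M)^{p-1}\|f-g\|_{L^1((0,T);B)}$, so relative compactness in $L^1$ lifts to $L^p$ for every $1\leq p<\infty$. A diagonal extraction, together with the tail control $\int_0^{1/k}\|f\|_B\,dt+\int_{T-1/k}^T\|f\|_B\,dt\leq 2M/k$, further reduces matters to proving relative compactness of $\mathcal F$ in $L^1((t_1,t_2);B)$ for arbitrary $0<t_1<t_2<T$.

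\emph{Ehrling interpolation.} The heart of the proof is the Ehrling--Lions lemma: since $X$ embeds compactly in $B$ and $B$ continuously in $Y$, for every $\eta>0$ there exists $C_\eta>0$ such that
$$
\|u\|_B \leq \eta\,\|u\|_X + C_\eta\,\|u\|_Y \qquad \text{for every } u\in X,
$$
which follows from a standard contradiction argument: a normalized sequence $u_n$ with $\|u_n\|_B=1$, $\|u_n\|_X\leq 1/\eta$ and $\|u_n\|_Y\to 0$ would have, by compactness, a subsequence converging in $B$, hence in $Y$, to a limit of $B$-norm one that is zero in $Y$, absurd. Applying the inequality pointwise to $\mathcal T_s f - f$ and integrating,
$$
\|\mathcal T_s f - f\|_{L^1((t_1,t_2);B)} \leq 2M(t_2-t_1)\,\eta + C_\eta\,\|\mathcal T_s f - f\|_{L^1((t_1,t_2);Y)}.
$$
Choosing first $\eta$ small and then $|s|$ small by the hypothesis of the theorem, I obtain
$$
\sup_{f\in\mathcal F}\|\mathcal T_s f - f\|_{L^1((t_1,t_2);B)} \to 0 \quad \text{as } s\to 0.
$$

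\emph{Time averages and Arzel\`a--Ascoli.} For small $\delta>0$ I introduce the Steklov averages $f_\delta(t):=\delta^{-1}\int_t^{t+\delta}f(\tau)\,d\tau$, which are $X$-valued functions on $(t_1,t_2-\delta)$. The equicontinuity just obtained yields $\sup_{f\in\mathcal F}\|f_\delta-f\|_{L^1((t_1,t_2-\delta);B)}\to 0$ as $\delta\to 0$. For each fixed $\delta$ the family $\{f_\delta:f\in\mathcal F\}$ is pointwise bounded in $X$ by $M$, hence pointwise relatively compact in $B$ by the compact embedding $X\hookrightarrow B$, and uniformly Lipschitz in time in $X$ (a fortiori in $B$), since $\|f_\delta(t+h)-f_\delta(t)\|_X\leq 2Mh/\delta$ directly from the definition. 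By the vector-valued Arzel\`a--Ascoli theorem it is relatively compact in $C([t_1,t_2-\delta];B)$, and hence in $L^1((t_1,t_2-\delta);B)$. Since $\mathcal F$ is uniformly approximated in $L^1(B)$ by this totally bounded family as $\delta\to 0$, $\mathcal F$ itself is totally bounded, and thus relatively compact, in $L^1((t_1,t_2);B)$, which combined with Step~1 completes the proof. The principal obstacle is the Ehrling step: it is the only place where the compactness of $X\hookrightarrow B$ must be leveraged to upgrade the $Y$-valued time continuity, which is all the hypothesis provides, to a $B$-valued time continuity; every other ingredient reduces to routine interpolation, approximation, and Arzel\`a--Ascoli.
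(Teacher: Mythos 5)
The paper does not prove this theorem but states it as a direct citation of Simon; your reconstruction is correct and follows essentially the same route that Simon himself takes, namely reduction to $L^1$ on interior subintervals, the Ehrling--Lions lemma $\|u\|_B\le\eta\|u\|_X+C_\eta\|u\|_Y$ to upgrade the $Y$-valued shift equicontinuity to $B$-valued equicontinuity, and Steklov averages together with a vector-valued Arzel\`a--Ascoli argument to obtain total boundedness. All the quantitative verifications check out: the pointwise bound $\|\mathcal T_s f(t)-f(t)\|_X\le 2M$, the uniform approximation $\|f_\delta - f\|_{L^1((t_1,t_2-\delta);B)}\le\sup_{0\le s\le\delta}\|\mathcal T_s f-f\|_{L^1((t_1,t_2);B)}$, and the $2Mh/\delta$ equi-Lipschitz estimate for the averaged family.
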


We are now in a position to prove Theorem~\ref{mainthm}.
We prove the statement only in the case of the mixed Neumann--clamped boundary conditions \eqref{Dir} and for 
the scaling $\alpha=3$. The proof in the case of the mixed Neumann--periodic boundary conditions \eqref{PerDir}
or for the scaling $\alpha>3$ is completely analogous.

\begin{proof}[Proof of Theorem~\ref{mainthm}]
Let $\alpha=3$ and let $y^h$ be a weak solution to \eqref{dyneq} in $(0,T)$, satisfying
the mixed Neumann--clamped boundary conditions \eqref{Dir}, the initial conditions \eqref{ICy}--\eqref{ICdy},
and the energy inequality \eqref{energy0}.
The assumption \eqref{hypind} on the initial data and \eqref{energy0} imply that 
\begin{align}
\tfrac12 h^2 & \int_\Om |\partial_t y^h(t,x)|^2\, dx
+ \int_\Om W(\nablah y^h(t,x))\, dx
\nonumber
\\
& \leq 
Ch^4 + h^3 \Big(\int_0^t \!\! \int_{\Om'} |g(s,x')|^2\, dx'ds\Big)^{\!\frac12}\,
\Big(\int_0^t \!\! \int_\Om |\partial_t y_3^h(s,x)|^2\, dxds\Big)^{\!\frac12}
\label{energy-1}
\end{align}
for every $h\in(0,h_0)$ and a.e.\ $t\in(0,T)$.
By the Cauchy inequality we deduce that for every $T'\in J_T$
there exists a constant $C(T')>0$ such that
$$
\int_0^{T'} \!\! \int_\Om |\partial_t y^h|^2\, dxdt \leq C(T') h^2
$$
for every $h\in(0,h_0)$. Therefore, by \eqref{energy-1} we have that 
\begin{align}
& \displaystyle
\sup_{t\in[0,T']}\,\int_\Om |\partial_t y^h(t,x)|^2\, dx \leq C(T') h^2,
\label{kin-energy}
\\
& \displaystyle
\sup_{t\in[0,T']}\,\int_\Om W(\nablah y^h(t,x))\, dx \leq C(T') h^4.
\label{el-energy}
\end{align}
\medskip

\noindent{\bf Step 1.\ Construction of approximating rotations.}
By the energy estimate \eqref{el-energy} and by \cite[Theorem~6 and Remark~5]{FJM06} 
we can construct an approximating sequence $(R^h)$ 
in $L^\infty_{loc}(J_T; H^2(\Om';\mthree))$ 
such that $R^h(t,x')\in SO(3)$ for a.e.\ $(t,x')\in(0,T){\times}\Om'$ and
\begin{align}
& \displaystyle
\sup_{t\in[0,T']} \| \nablah y^h(t,\cdot) - R^h(t,\cdot) \|_{L^2(\Om)}\leq C(T') h^2,
\label{rig1}
\\
& \displaystyle
\sup_{t\in[0,T']} \| \nabla' R^h(t,\cdot)\|_{L^2(\Om')} 
+\sup_{t\in[0,T']} h\, \| (\nabla')^2 R^h(t,\cdot) \|_{L^2(\Om')}\leq C(T') h,
\label{rig2}
\\
& \displaystyle
\sup_{t\in[0,T']} \|R^h(t,\cdot)-\Id \|_{H^1(\Om')}\leq C(T') h
\label{rig3}
\end{align}
for every $T'\in J_T$.
By estimates \eqref{rig1} and \eqref{rig3} we deduce that
$$
\nablah y^h\to \Id \quad \text{strongly in } L^\infty_{loc}(J_T; L^2(\Om;\mthree)),
$$
hence $\partial_3 y^h\to 0$ and 
$$
\nabla y^h\to {\rm diag}(1,1,0) \quad \text{strongly in } L^\infty_{loc}(J_T; L^2(\Om;\mthree)).
$$
Since $|y^h(t,x)-(x', 0)|\leq \frac12h$ on $\partial\Om'\times(-\frac12,\frac12)$
for a.e.\ $t\in(0,T)$, the previous convergence together with the Poincar\'e inequality
implies \eqref{yhconv}.
\medskip

\noindent{\bf Step 2.\ Convergence of the sequence $A^h:=(R^h-\Id)/h$.}
Let us now consider the sequence 
$$
A^h:=\frac{R^h-\Id}{h}.
$$
By \eqref{rig3} there exists $A\in L^\infty_{loc}(J_T;H^1(\Om';\mthree))$ such that, 
up to subsequences,
\begin{equation}\label{Aconv}
A^h\wto A \quad \text{weakly}^* \text{ in } L^\infty_{loc}(J_T; H^1(\Om';\mthree)).
\end{equation}
We also notice that 
\begin{equation}\label{symR}
\sym\frac{R^h-\Id}{h^2}=-\frac{(A^h)^TA^h}{2},
\end{equation}
hence $\sym(R^h-\Id)/h^2$ is bounded in $L^\infty_{loc}(J_T; L^p(\Om';\mthree))$
for every $p<\infty$. In particular,
\begin{equation}\label{symAconv}
\sym A^h\to 0 \quad \text{strongly in } L^\infty_{loc}(J_T; L^p(\Om';\mthree))
\end{equation}
and $A$ is skew-symmetric.

We now claim that $(A^h e_\alpha)$ is strongly compact in 
$L^q_{loc}(J_T; L^p(\Om';\R^3))$ for $\alpha=1,2$ and any $1\leq q<\infty$, $2\leq p<\infty$. 
As $(A^he_\alpha)$ is uniformly bounded in $L^\infty_{loc}(J_T;H^1(\Om';\R^3))$, 
by Theorem~\ref{thmsimon} it is enough to show that for every $0<t_1<t_2<T$ and
any $h_j\to0$
\begin{equation}\label{scpt}
\lim_{s\to 0}\ \sup_{j}
\int_{t_1}^{t_2}\|A^{h_j}(t+s,\cdot)e_\alpha - A^{h_j}(t,\cdot)e_\alpha \|_{H^{-1}(\Om')}\, dt =0.
\end{equation}
We first observe that for a.e.\ $t\in (t_1,t_2)$ and $|s|<\delta$
\begin{align*}
&\|A^h(t+s,\cdot)e_\alpha - A^h(t,\cdot) e_\alpha \|_{H^{-1}(\Om')}
\\
& \leq  \tfrac1h \|R^h(t+s,\cdot)e_\alpha - \partial_\alpha y^h(t+s,\cdot) \|_{H^{-1}(\Om)}
\\
& \qquad
{}+ \tfrac1h \|\partial_\alpha y^h(t+s,\cdot) -\partial_\alpha y^h(t,\cdot) \|_{H^{-1}(\Om)}
+ \tfrac1h \|R^h(t,\cdot)e_\alpha - \partial_\alpha y^h(t,\cdot) \|_{H^{-1}(\Om)}.
\end{align*}
Owing to \eqref{rig1} there exists a constant $C(t_2)>0$ such that
$$
\tfrac1h \|R^h(\tau,\cdot)e_\alpha -\partial_\alpha y^h(\tau,\cdot) \|_{H^{-1}(\Om)}\leq
\tfrac1h \|R^h(\tau,\cdot)e_\alpha -\partial_\alpha y^h(\tau,\cdot) \|_{L^2(\Om)}\leq C(t_2)h
$$
for a.e.\ $\tau\in (t_1,t_2+\delta)$.
Moreover, in the same time interval we have
$$
\begin{array}{c}
\tfrac1h \|\partial_\alpha y^h(t+s,\cdot) -\partial_\alpha y^h(t,\cdot)  \|_{H^{-1}(\Om)}
 \leq \tfrac1h \|y^h(t+s,\cdot) - y^h(t,\cdot)\|_{L^2(\Om)} 
\smallskip
\\
\displaystyle \leq  \tfrac1h \int_t^{t+s} \|\partial_t y^h(\tau,\cdot)\|_{L^2(\Om)}\,d\tau
\leq C(t_2) |s|,
\end{array}
$$
where the last inequality follows from \eqref{kin-energy}.
Combining together all the previous inequalities, we conclude that
\begin{equation}\label{intest}
\int_{t_1}^{t_2}\|A^h(t+s,\cdot)e_\alpha - A^h(t,\cdot)e_\alpha \|_{H^{-1}(\Om')}\, dt\leq 
C(t_2)(2h+|s|)(t_2-t_1).
\end{equation}
Now, let $(h_j)$ be any sequence converging to $0$ and let us fix $\e>0$.
Clearly the supremum over the finite set 
$\{h_j : h_j\geq \e \}$ tends to zero as $s\to 0$, since
$$
\int_{t_1}^{t_2}\|f(s+t,\cdot)- f(t,\cdot)\|_{H^{-1}(\Om';\R^3)}\, dt \to 0
$$
for any fixed $f\in L^\infty_{loc}(J_T;L^2(\Om';\R^3))$. On the other hand,
by \eqref{intest} the supremum over the remaining set $\{ h_j : h_j < \e \}$ satisfies 
$$
\limsup_{s\to 0} \, \sup_{h_j<\e}\int_{t_1}^{t_2}
\|A^{h_j}(t+s,\cdot)e_\alpha - A^{h_j}(t,\cdot)e_\alpha \|_{H^{-1}(\Om')} \, dt\leq
2C(t_2)\e(t_2-t_1).
$$
Since $\e$ is arbitrary, this establishes \eqref{scpt} and, in turn,
strong compactness of $A^he_\alpha$ in $L^q_{loc}(J_T; L^p(\Om';\R^3))$.

Using the strong compactness of $A^he_\alpha$ in the identity \eqref{symR}
and the fact that $A$ is skew-symmetric,
we obtain that for every $\alpha,\beta=1,2$
\begin{equation}\label{symRc}
\Big(\sym\frac{R^h-\Id}{h^2}\Big)_{\alpha\beta}=
-\tfrac12(A^he_\alpha{\,\cdot\,} A^he_\beta)
\ \to \ -\tfrac12 (Ae_\alpha{\,\cdot\,} Ae_\beta)=\tfrac12 (A^2)_{\alpha\beta}
\end{equation}
strongly in $L^q_{loc}(J_T; L^2(\Om'))$ for every $1\leq q<\infty$.
\medskip

\noindent{\bf Step 3.\ Convergence of the displacements.}
{}From \eqref{rig1} and \eqref{symR} it follows that the symmetric part of
$\nabla' u^h$ is bounded in $L^\infty_{loc}(J_T; L^2(\Om';\mtwo))$.
Since $u^h(t,x')~=~0$ for $(t,x')\in(0,T){\times}\partial\Om'$,
the Korn-Poincar\'e inequality implies that $u^h$ is bounded in 
$L^\infty_{loc}(J_T; H^1(\Om';\R^2))$. Therefore, there exists a function
$u\in L^\infty_{loc}(J_T; H^1(\Om';\R^2))$ such that, up to subsequences, \eqref{uconv}
is satisfied.
In particular, we have that for every $T'\in(0,T)$
$$
\int_0^{T'} u^h(t,\cdot)\, dt \ \wto \ \int_0^{T'} u(t,\cdot)\, dt 
\quad \text{weakly in } H^1(\Om';\R^2),
$$
hence
$$
\int_0^{T'} u^h(t,\cdot)\, dt \ \to \ \int_0^{T'} u(t,\cdot)\, dt 
\quad \text{strongly in } L^2(\partial\Om';\R^2).
$$
Since $u^h=0$ on $\partial\Om'$ for a.e.\ $t$, 
this implies that $\int_0^{T'} u(t,x')\, dt=0$
for a.e.\ $x'\in\partial\Om'$ and every $T'\in(0,T)$, which yields
$u(t,x')=0$ for a.e.\ $(t,x')\in(0,T){\times}\partial\Om'$.
Moreover, passing to the limit in the identity
$$
h\partial_2 u^h_1= \tfrac1h \int_{-\frac12}^{\frac12}(\partial_2 y^h_1
-R^h_{12})\, dx_3 +A^h_{12},
$$
and owing to \eqref{uconv}, \eqref{rig1}, and \eqref{Aconv}, we deduce that
\begin{equation}\label{A12}
A_{12}=0.
\end{equation}

Using \eqref{rig1}, \eqref{rig3}, and the boundary condition
\begin{equation}\label{vBC}
v^h(t,x')=0 \quad \text{for } (t,x')\in(0,T){\times}\partial\Om',
\end{equation}
it is easy to see that $v^h$ is bounded in $L^\infty_{loc}(J_T; H^1(\Om'))$. 
Therefore, there exists $v\in L^\infty_{loc}(J_T; H^1(\Om'))$ such that, up to subsequences,
\begin{equation}\label{vconv3}
v^h\wto v \quad \text{weakly}^* \text{ in } L^\infty_{loc}(J_T; H^1(\Om')).
\end{equation}
Arguing as above, we infer from \eqref{vBC} and \eqref{vconv3} that
$v(t,x')=0$ for a.e.\ $(t,x')\in(0,T){\times}\partial\Om'$.
Moreover, the energy estimate \eqref{kin-energy} implies that $\partial_t v^h$ is bounded in 
$L^\infty_{loc}(J_T; L^2(\Om'))$.
This guarantees (see \cite{Simon}) that 
$v\in W^{1,\infty}_{loc}(J_T; L^2(\Om'))$ and that the
convergence properties \eqref{vconv} and \eqref{vconv2} are satisfied. 
Furthermore, from \eqref{rig1} and \eqref{Aconv} it follows that for $\alpha=1,2$
\begin{equation}\label{vder}
\partial_\alpha v= A_{3\alpha}.
\end{equation}
Since $A\in L^\infty_{loc}(J_T; H^1(\Om';\mthree))$, we deduce that
$v\in L^\infty_{loc}(J_T; H^2(\Om'))$.
Combining together \eqref{A12} and \eqref{vder}, we conclude that
\begin{equation}\label{vder2}
A=-\begin{pmatrix} \nabla'v \\ 0 \end{pmatrix}\otimes e_3+e_3\otimes\begin{pmatrix} \nabla'v \\ 0 \end{pmatrix}.
\end{equation}
Arguing as in \cite[Corollary~1]{FJM06}, one can show that the first moment
of the displacement, defined by
\begin{equation}\label{fod}
\zeta^h(t,x'):=\int_{-\frac12}^{\frac12} x_3\bigg(y^h-
\begin{pmatrix} x' \\ hx_3 \end{pmatrix} \bigg)\, dx_3,
\end{equation}
satisfies
$$
\frac{1}{h^2}\zeta^h\wto \frac{1}{12} Ae_3=-\frac{1}{12}\begin{pmatrix} \nabla'v \\ 0 \end{pmatrix}
\quad \text{weakly}^* \text{ in } L^\infty_{loc}(J_T;H^1(\Om';\R^3)).
$$
As $\zeta^h(t,x')=0$ for a.e.\ $(t,x')\in(0,T){\times}\partial\Om'$, 
the previous convergence, together with the
compactness of the trace operator from $H^1(\Om';\R^3)$ into $L^2(\partial\Om';\R^3)$, yields
that $\nabla' v(t,x')=0$ for a.e.\ $(t,x')\in(0,T){\times}\partial\Om'$.
\medskip

\noindent{\bf Step 4.\ Decomposition of the deformation gradient in rotation and strain.}
We now make use of the approximating sequence of rotations $R^h$ to decompose the deformation
gradients as
\begin{equation}\label{decomp}
\nablah y^h= R^h(\Id+h^2G^h), 
\end{equation}
where the sequence $G^h$ is bounded in $L^\infty_{loc}(J_T; L^2(\Om;\mthree))$
by \eqref{rig1}. Thus, up to extracting a subsequence, we have that
\begin{equation}\label{Gconv}
G^h\wto G \quad \text{weakly}^* \text{ in } L^\infty_{loc}(J_T; L^2(\Om;\mthree)).
\end{equation}
Arguing as in \cite[Lemma~2]{FJM06}, we find that for $\beta=1,2$
$$
\begin{array}{c}
\displaystyle
R^h(t,x')\frac{
G^h(t,x',x_3+\ell)e_\beta - G^h(t,x',x_3)
e_\beta}{l} 
\smallskip
\\
\displaystyle
=\partial_\beta\Big( \tfrac1\ell
\int_0^\ell \frac{\tfrac1h \partial_3 y^h(t,x',x_3+\tilde\ell)}{h}\, d\tilde\ell
\Big).
\end{array}
$$
Since $R^h$ converges to $\Id$ boundedly in measure on $(0,T'){\times}\Om$ 
for every $T'\in J_T$,
we have by \eqref{Gconv} that the left-hand side of the previous expression 
converges to the difference quotient
$(G(t,x',x_3+\ell)e_\beta - G(t,x',x_3)e_\beta)/\ell$ weakly$^*$ in 
$L^\infty_{loc}(J_T; L^2(\Om;\R^3))$, 
while the right-hand side converges to
$\partial_\beta Ae_3$ weakly$^*$ in $L^\infty_{loc}(J_T; H^{-1}(\Om;\R^3))$
by \eqref{rig1} and \eqref{Aconv}. Thus, we conclude that
$$
\frac{
G(t,x',x_3+\ell)e_\beta - G(t,x',x_3)
e_\beta}{\ell} =\partial_\beta A(t,x')e_3,
$$
hence there exists some $\bar G\in L^\infty_{loc}(J_T; L^2(\Om';\mthree))$ such that
$$
G(t,x',x_3)e_\beta= \bar G(t,x')e_\beta+x_3\partial_\beta A(t,x')e_3.
$$
Taking into account \eqref{vder2}, we deduce that for $\alpha,\beta=1,2$
\begin{equation}\label{linG}
G_{\alpha\beta}(t,x',x_3)= \bar G_{\alpha\beta}(t,x')- x_3\partial^2_{\alpha\beta} v(t,x').
\end{equation}

Let $\bar G''$ be the $2{\times}2$-submatrix given by $\bar G_{\alpha\beta}'':=\bar G_{\alpha\beta}$
for $1\leq\alpha,\beta\leq2$.
In order to identify the symmetric part of $\bar G''$, we first observe that
$$
\int_{-\frac12}^\frac12 \sym (R^hG^h)\, dx_3=
\int_{-\frac12}^\frac12 \sym\frac{\nablah y^h-\Id}{h^2}\, dx_3 -
\int_{-\frac12}^\frac12 \sym\frac{R^h-\Id}{h^2}\, dx_3.
$$
Passing to the limit and using \eqref{uconv}, \eqref{symRc}, and
\eqref{vder2}, we deduce that
\begin{equation}\label{barG}
\sym\bar G''=\sym\nabla' u +\tfrac12 \nabla' v\otimes\nabla' v .
\end{equation}
\medskip

\noindent{\bf Step 5.\ Convergence of the stress.}
We can now derive the limit equations satisfied by $u$ and $v$.
To this aim we set
$$
E^h:=\tfrac{1}{h^2}DW(\Id+h^2G^h).
$$
Then, by frame-indifference we have
$$
DW(\nablah y^h)= R^hDW(\Id+h^2G^h) =h^2R^hE^h,
$$
so that equation \eqref{dyneq} can be written in the weak form as
\begin{equation}\label{dyneq2}
\int_0^T \!\!\! \int_\Om \partial_t y^h{\,\cdot\,} \partial_t\varphi\, dxdt
- \int_0^T \!\!\! \int_\Om R^hE^h{\,:\,}\nablah\varphi\, dxdt
+ \int_0^T \!\!\!\int_\Om h g\varphi_3\, dxdt =0 
\end{equation}
for every $\varphi\in L^2((0,T);H^1(\Om;\R^3))\cap H^1_0((0,T);L^2(\Om;\R^3))$ 
such that $\varphi=0$ on $(0,T){\times}\partial\Om'{\times}(-\frac12,\frac12)$ (see Remark~\ref{rmk:weak}).
We also note that arguing as in \cite[Proposition~2.3]{Mue-Pak}, one can show that
\begin{equation}\label{Econv}
E^h\wto E:=\leb G \quad \text{weakly}^* \text{ in } L^\infty_{loc}(J_T; L^2(\Om;\mthree)),
\end{equation}
where the linear map $\leb$ on matrix space is given by $\leb:=D^2W(\Id)$.

Let $T'\in J_T$ and let $\varphi$ be a test function such that 
$\varphi=0$ on $(T',T){\times}\Om$.
Multiplying \eqref{dyneq2} by $h$ and passing to the limit as $h\to0$, we obtain
\begin{equation}\label{E3=0}
\int_0^{T'} \!\!\!\int_\Om Ee_3{\,\cdot\,}\partial_3\varphi\, dxdt=0 
\end{equation}
for every $\varphi\in L^2((0,T');H^1(\Om;\R^3))\cap H^1_0((0,T');L^2(\Om;\R^3))$ 
such that $\varphi=0$ on $(0,T'){\times}\partial\Om'{\times}(-\frac12,\frac12)$ and every $T'\in J_T$.
Here we have used \eqref{kin-energy} and the fact that $R^hE^h$ converges
to $E$ weakly$^*$ in $L^\infty((0,T'); L^2(\Om;\R^3))$, 
since $R^h$ converges to $\Id$ boundedly in measure on $(0,T'){\times}\Om'$.
Equality \eqref{E3=0} yields $Ee_3=0$ a.e.\ in $(0,T){\times}\Om$
and, since $E$ is symmetric by \eqref{Econv},
\begin{equation}\label{cons1}
E=
\begin{pmatrix}
E_{11} & E_{12} & 0 \\
E_{12} & E_{22} & 0 \\
0 & 0 & 0
\end{pmatrix}.
\end{equation}
\medskip

\noindent{\bf Step 6.\ Derivation of the limit equations.}
Let us introduce the zeroth and first moments of $E^h$, defined by
$$
\bar E^h(t,x')=\int_{-\frac12}^{\frac12} E^h(t,x)\,dx_3, \qquad
\hat E^h(t,x')=\int_{-\frac12}^{\frac12} x_3 E^h(t,x)\,dx_3,
$$
and let us fix $T'\in J_T$. 

Let $\psi\in L^2((0,T');H^1_0(\Om';\R^2))\cap H^1_0((0,T');L^2(\Om';\R^2))$. Choosing
$\varphi=(\psi,0)$ as test function in \eqref{dyneq2}, we obtain
$$
\int_0^{T'} \!\!\! \int_\Om \ \sum_{\alpha=1}^2\partial_t y^h_\alpha\partial_t\psi_\alpha\, dxdt
- \int_0^{T'} \!\!\! \int_{\Om'} \, \sum_{\alpha,\beta=1}^2 (R^h\bar E^h)_{\alpha\beta}
\partial_\beta\psi_\alpha\, dx'dt=0. 
$$
Passing to the limit as $h\to 0$ and using \eqref{kin-energy}, we deduce
\begin{equation}\label{mom0}
\int_0^{T'} \!\!\!\int_{\Om'} \, \sum_{\alpha,\beta=1}^2 \bar E_{\alpha\beta}
\partial_\beta\psi_\alpha\, dx'dt=0 
\end{equation}
for every $\psi\in L^2((0,T');H^1_0(\Om';\R^2))\cap H^1_0((0,T');L^2(\Om';\R^2))$.
By approximation \eqref{mom0} holds for every $\psi\in L^2((0,T');H^1_0(\Om';\R^2))$.

Let now $\phi\in L^2((0,T');H^2_0(\Om'))\cap H^1_0((0,T');H^1_0(\Om'))$.
Considering $\varphi=(0,\phi)$ as test function in \eqref{dyneq2}, we have
\begin{equation}\label{passo1}
\int_0^{T'} \!\!\! \int_{\Om'} \partial_t v^h\partial_t\phi\, dx'dt
- \int_0^{T'} \!\!\! \int_{\Om'} \ \sum_{\alpha=1}^2 \tfrac1h(R^h\bar E^h)_{3\alpha}
\partial_\alpha\phi\, dx'dt
+ \int_0^{T'} \!\!\!\int_{\Om'} g\phi \, dx'dt =0.
\end{equation}
We notice that
\begin{equation}\label{passo2}
\tfrac1h(R^h\bar E^h)_{3\alpha}= (A^h\bar E^h)_{3\alpha} +\tfrac1h\bar E^h_{3\alpha}.
\end{equation}
The strong compactness of $A^he_\beta$ in $L^2((0,T');L^p(\Om';\R^3))$
for $2\leq p<\infty$ and identity \eqref{vder} ensure that for $\beta=1,2$
$$
\int_0^{T'} \!\!\! \int_{\Om'} A^h_{3\beta}\bar E^h_{\beta\alpha}\partial_\alpha\phi\, dx'dt
\quad\longrightarrow \quad 
\int_0^{T'} \!\!\! \int_{\Om'} \bar E_{\beta\alpha}\partial_\beta v\partial_\alpha\phi\, dx'dt,
$$
while property \eqref{symAconv} implies that $A^h_{33}\to 0$ strongly in
$L^\infty((0,T');L^p(\Om'))$ for every $p<\infty$, hence
$$
\int_0^{T'} \!\!\! \int_{\Om'} A^h_{33}\bar E^h_{3\alpha}\partial_\alpha\phi\, dx'dt
\quad\longrightarrow \quad 0.
$$
These two convergence results, together with \eqref{vconv2}, \eqref{passo1}, and
\eqref{passo2}, guarantee that
\begin{align}
\lim_{h\to 0} &\int_0^{T'} \!\!\! \int_{\Om'} \sum_{\alpha=1}^2 \tfrac1h\bar E^h_{3\alpha}
\partial_\alpha\phi\, dx'dt
\nonumber \\
& = \int_0^{T'} \!\!\! \int_{\Om'} \partial_t v\partial_t\phi\, dx'dt
\label{cons2}
 -\int_0^{T'} \!\!\!\int_{\Om'} \bar E''{\,:\,}
\nabla'v{\,\otimes\,}\nabla'\phi\, dx'dt
+\int_0^{T'} \!\!\!\int_{\Om'} g\phi \, dx'dt
\end{align}
for every $\phi\in L^2((0,T');H^2_0(\Om'))\cap H^1_0((0,T');H^1_0(\Om'))$.

In order to derive the equation satisfied by the first moment $\hat E$, let us consider
$\eta\in L^2((0,T');H^1_0(\Om';\R^2))\cap H^1_0((0,T');L^2(\Om';\R^2))$. Choosing
$\varphi(t,x)=(x_3\eta(t,x'),0)$ as test function in \eqref{dyneq2}, we obtain
\begin{equation}\label{step}
\begin{array}{c}
\displaystyle
\int_0^{T'} \!\!\! \int_\Om \ \sum_{\alpha=1}^2 x_3\partial_t y^h_\alpha\partial_t\eta_\alpha\, dxdt
- \int_0^{T'} \!\!\! \int_{\Om'} \, \sum_{\alpha,\beta=1}^2 (R^h\hat E^h)_{\alpha\beta}
\partial_\beta\eta_\alpha\, dx'dt
\medskip
\\
\displaystyle
{}- \int_0^{T'} \!\!\! \int_{\Om'} \ \sum_{\alpha=1}^2 \tfrac1h(R^h\bar E^h)_{\alpha 3}
\eta_\alpha\, dx'dt=0.
\end{array}
\end{equation}
We note that for $\alpha=1,2$ and $j=1,2,3$
\begin{align}
& \displaystyle \label{passo3}
\tfrac1h(R^h\bar E^h)_{\alpha 3}= \sum_{j=1}^3 A^h_{\alpha j}\bar E^h_{j3} 
+\tfrac1h\bar E^h_{\alpha 3},
\\
& \displaystyle
A^h_{\alpha j}\bar E^h_{j3}= 
- A^h_{j\alpha}\bar E^h_{j3} + 2 
(\sym A^h)_{j\alpha}\bar E^h_{j3}.
\end{align}
Using again \eqref{symAconv} and the strong compactness of $A^he_\beta$ in 
$L^2((0,T');L^p(\Om';\R^3))$ for $2\leq p<\infty$,
we deduce from the previous decomposition
$$
\int_0^{T'} \!\!\! \int_{\Om'} \ \sum_{j=1}^3 A^h_{\alpha j}\bar E^h_{j3}\eta_\alpha\, dx'dt
\quad\longrightarrow\quad 
-\int_0^{T'} \!\!\! \int_{\Om'} \ \sum_{j=1}^3 A_{j\alpha}\bar E_{j3}\eta_\alpha\, dx'dt=0,
$$
where the last equality follows from \eqref{vder2} and \eqref{cons1}. 
Therefore, passing to the limit in \eqref{step} and using the energy estimate
\eqref{kin-energy} and the decomposition \eqref{passo3}, we obtain
\begin{equation}\label{cons3}
\int_0^{T'} \!\!\! \int_{\Om'} \ \sum_{\alpha=1}^2 \tfrac1h\bar E^h_{\alpha 3}
\eta_\alpha\, dx'dt 
\quad\longrightarrow\quad  
- \int_0^{T'} \!\!\! \int_{\Om'} \, \sum_{\alpha,\beta=1}^2 \hat E_{\alpha\beta}
\partial_\beta\eta_\alpha\, dx'dt.
\end{equation}
Using the identity
$$
E^h-(E^h)^T=-h^2 \big( E^h(G^h)^T - G^h(E^h)^T
\big),
$$
one can prove that 
\begin{equation}\label{L1est}
\sup_{t\in[0,T']}\|E^h-(E^h)^T\|_{L^1(\Om)}\leq C(T')h^2 
\end{equation}
(see \cite[Step~4 in the proof of Theorem~1.1]{Mue-Pak}). Choosing $\eta=\nabla'\phi$
in \eqref{cons3} and combining it with the previous remark and \eqref{cons2}, we conclude that
\begin{equation}\label{fin}
\begin{array}{c}
\displaystyle
\int_0^{T'} \!\!\! \int_{\Om'} \partial_t v\partial_t\phi\, dx'dt
-\int_0^{T'} \!\!\! \int_{\Om'} \bar E''{\,:\,}\nabla'v\otimes\nabla'\phi\, dx'dt
\medskip
\\
\displaystyle
\qquad{}+ \int_0^{T'} \!\!\! \int_{\Om'} \hat E''{\,:\,}(\nabla')^2\phi\, dx'dt
+ \int_0^{T'} \!\!\! \int_{\Om'} g\phi\, dx'dt=0.
\end{array}
\end{equation}
for every $\phi\in L^2((0,T');H^2_0(\Om')\cap W^{1,\infty}(\Om'))\cap H^1_0((0,T');H^1_0(\Om'))$.
By approximation \eqref{fin} holds for every $\phi\in L^2((0,T');H^2_0(\Om'))\cap H^1_0((0,T');H^1_0(\Om'))$.

By \cite[Proposition~3.2]{Mue-Pak} and \eqref{linG} we obtain
$$
E''=\leb_2 G''=\leb_2 \bar G'' -x_3\leb_2((\nabla')^2v).
$$
As a consequence of this equality and of \eqref{barG}, we have 
$$
\bar E''=\leb_2 (\sym \bar G'')=\leb_2(\sym \nabla' u +\tfrac12 \nabla'v\otimes\nabla'v),
$$
while
$$
\hat E=-\tfrac{1}{12}\leb_2((\nabla')^2v).
$$
Owing to the last two identities and to equations \eqref{mom0} and \eqref{fin},
the limit displacement $(u,v)$ is a weak solution to \eqref{dynvK} (see Remark~\ref{rmk:weak}).
\medskip

\noindent{\bf Step 7.\ Derivation of the initial condition.}
It remains to prove that $v$ satisfies the initial conditions \eqref{ICs}.
First we observe that assumption \eqref{hypind} implies that, up to subsequences,
\begin{equation}\label{icto+}
\tfrac{1}{h^2}\int_{-\frac12}^{\frac12}\hat w_3^h(\cdot,hx_3)\,dx_3 \ \wto \ \hat w_3
\end{equation}
weakly in $L^2(\Om')$, and that 
\begin{equation}\label{icto}
\tfrac{1}{h}\int_{-\frac12}^{\frac12}\bar w_3^h(\cdot,hx_3)\,dx_3\ \to \ \bar w_3
\end{equation}
strongly in $H^1(\Om')$, owing to \cite[Lemma~13]{LM} (the proof of the Lemma can be easily
adapted to cover also the case $\alpha>3$).

Since $W^{1,\infty}((0,T'); L^2(\Om'))$ embeds into $C([0,T']; L^2(\Om'))$, we have
that $v^h, v\in C([0,T']; L^2(\Om'))$ for every $h\in(0,h_0)$, so that by \eqref{ICy}
and \eqref{vconv} 
$$
v^h(0,\cdot)=\tfrac{1}{h}\int_{-\frac12}^{\frac12}\bar w_3^h(\cdot,hx_3)\,dx_3
\ \to \ v(0,\cdot)
$$
strongly in $L^2(\Om')$.
By \eqref{icto} we conclude that $v(0,x')=\bar w_3(x')$ for a.e.\ $x'\in\Om'$.

Using the decompositions \eqref{passo2} and \eqref{passo3}, and the estimate \eqref{L1est},
we deduce from equations \eqref{passo1} and \eqref{step} that there exists a constant $C>0$,
independent of $h$, such that
$$
\Big|\int_0^{T'} \!\!\! \int_{\Om'} \Big(
\partial_t v^h\partial_t\phi
-\sum_{\alpha=1}^2 \partial_t \zeta^h_\alpha\partial_t\partial_\alpha\phi
\Big) \, dx'dt\Big|
\leq C\|\phi \|_{L^2((0,T');H^3_0(\Om'))}
$$
for every $\phi\in C^\infty_c((0,T'){\times}\Om')$. Here $\zeta^h$ is the first moment
of the displacement introduced in \eqref{fod}.
This implies that the sequence 
$$
\partial^2_{t} v^h+ \sum_{\alpha=1}^2 \partial^2_{t}\partial_\alpha 
\zeta^h_\alpha
$$ 
is uniformly bounded in $L^2((0,T');H^{-3}(\Om'))$.
On the other hand, by \eqref{kin-energy} the sequence $(\partial_t\partial_\alpha 
\zeta^h_\alpha)$ converges to $0$ strongly in $L^\infty((0,T');H^{-1}(\Om'))$; thus, 
by \eqref{vconv2} we conclude that
$$
\partial_t v^h+ \sum_{\alpha=1}^2 \partial_t\partial_\alpha 
\zeta^h_\alpha \wto \partial_t v \quad
\text{weakly}^* \text{ in } L^\infty((0,T');H^{-1}(\Om')).
$$
As $H^1((0,T'); H^{-3}(\Om'))\cap L^\infty((0,T');H^{-1}(\Om'))$ 
embeds compactly into the space $C([0,T']; H^{-3}(\Om'))$, it follows that
$$
\|\partial_t v^h(t,\cdot)+\sum_{\alpha=1}^2 \partial_t\partial_\alpha 
\zeta^h_\alpha(t,\cdot) - \partial_t v(t,\cdot)\|_{H^{-3}(\Om')}\ \to \ 0
$$
uniformly for $t\in[0,T']$. In particular, we have 
$$
\partial_t v^h(0,\cdot)+ \sum_{\alpha=1}^2 \partial_t\partial_\alpha 
\zeta^h_\alpha(0,\cdot)\ \to \ \partial_t v(0,\cdot)
$$
strongly in $H^{-3}(\Om')$. The initial condition \eqref{ICdy} and the estimate \eqref{hypind} guarantee that
$\partial_t\partial_\alpha \zeta^h_\alpha(0,\cdot)$ converge to $0$ strongly in $H^{-1}(\Om')$.
Therefore, by \eqref{ICdy} and \eqref{icto+} 
we deduce that $\partial_t v(0,x')=\hat w_3(x')$ for a.e.\ $x'\in\Om'$.
This concludes the proof.
\end{proof}

\bigskip
\medskip
\noindent
\textbf{Acknowledgments.}
This work was partially supported by GNAMPA, 
through the project ``Problemi di riduzione di dimensione per strutture elastiche sottili'' 2008.

\bigskip


\end{document}